\newtheorem{theorem}{Theorem}
\theoremstyle{definition}
\newtheorem*{definition}{Definition}
\newtheorem{remark}{Remark}
\theoremstyle{definition}
\newtheorem{example}{Example}
\theoremstyle{plain}
\newtheorem{proposition}[theorem]{Proposition}
\newtheorem{lemma}[theorem]{Lemma}
\newtheorem{corollary}[theorem]{Corollary}
\newcommand{\Z}{\mathbf{Z}}
\newcommand{\Q}{\mathbf{Q}}
\newcommand{\Zp}{\mathbf{Z}_p}
\newcommand{\cR}{\mathcal{R}}
\newcommand{\fR}{\mathfrak{R}}
\newcommand{\disc}{\mathrm{disc}}
\DeclareMathOperator{\Haar}{Haar}
\begin{document}

\title[Probabilistic Properties of $p$-adic Polynomials]{On Certain Probabilistic Properties of Polynomials over the Ring of $p$-adic Integers}

\author{Antonio Lei and Antoine Poulin}

\maketitle
\begin{abstract}
In this article, we study several probabilistic properties of polynomials defined over the ring of $p$-adic integers under the Haar measure. {First}, we calculate the probability that a monic polynomial is separable, generalizing a result of Polak. {Second}, we introduce the notion of two polynomials being strongly coprime and calculate the probability of two monic polynomials {being} strongly coprime. Finally, we explain how our method can be used to extrapolate other probabilistic properties of polynomials over the ring of $p$-adic integers from polynomials defined over the integers modulo powers of $p$.
\end{abstract}

% >>>

\section{Introduction.}
{Let $K$ be a field. We say that  a polynomial $f\in K[x]$ is \textit{separable} if its roots in an algebraic closure of $K$ are distinct. This is equivalent to $(f,f')=K[x]$, where $f'$ denotes the formal derivative of $f$. More abstractly, this is also equivalent to $A=K[x]/f$ being a separable $K$-algebra (meaning that $A$ is projective as an $A\otimes_KA$-module).  For example, $x(x+1)$ is separable, but $(x+1)^2$ is not. We may generalize this definition to commutative rings.}
Let $R$ be a commutative ring. We say that a polynomial $f\in R[x]$ is \textit{separable} if $A=R[x]/f$ is a separable $R$-algebra {(meaning that $A$ is projective as an $A\otimes_RA$-module). When $f$ is monic, this turns out to  be equivalent to $(f,f')=R[x]$ as in the case of polynomials defined over a field (we refer the readers to  \cite{AG,DI,polak} for details)}. If $f,g\in R[x]$, we say that $f$ and $g$ are \textit{relatively prime} or \textit{coprime} if there is no monic polynomial of positive degree in $R[x]$ that divides both $f$ and $g$.

The motivation for this article comes from two articles, namely \cite{polak}, where the proportion of separable monic polynomials in $(\Z/p^k\Z)[x]$ is derived from a previous result of Carlitz \cite{car} on separable polynomials in $(\Z/p\Z)[x]$, and \cite{HH}, where  certain formulae {for} the probability that two monic polynomials in $(\Z/p^k\Z)[x]$ are relatively prime are found. The results we are interested in are as follows.
\begin{theorem}[Polak, \cite{polak}]\label{thm:polak}
 Let $p$ be a prime number and  $k\geq$ 1 an integer. The proportion of monic polynomials of degree  $d\geq2$  that are separable in $(\Z/p^k\Z)[x]$ is $1-p^{-1}$. 
\end{theorem}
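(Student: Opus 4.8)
The plan is to reduce the count over the local ring $R=\Z/p^k\Z$ to a count over its residue field $\Fp$, and then to invoke Carlitz's theorem. The organizing principle is that, for a \emph{monic} polynomial, separability is insensitive to the choice of lift modulo $p$.

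First I would establish the following lifting lemma: a monic $f\in R[x]$ is separable if and only if its reduction $\bar f\in\Fp[x]$ is separable. Using the criterion recalled in the introduction, $f$ is separable if and only if $(f,f')=R[x]$, i.e.\ if and only if the quotient $R[x]/(f,f')$ vanishes. Since $f$ is monic of degree $d$, the ring $R[x]/(f)$ is a free $R$-module of rank $d$, so $M:=R[x]/(f,f')$ is a finitely generated $R$-module. As $R$ is local with maximal ideal $(p)$, Nakayama's lemma gives $M=0$ if and only if $M\otimes_R\Fp=0$; and $M\otimes_R\Fp=\Fp[x]/(\bar f,\bar{f'})$ vanishes precisely when $\bar f$ is separable over $\Fp$. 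This is the step I expect to be the crux: once separability is shown to depend only on $\bar f$, the rest is bookkeeping. (Equivalently one may phrase it through the discriminant: $\disc(f)$ is a unit in $R$ if and only if $\disc(\bar f)\neq 0$, because an element of the local ring $R$ is a unit exactly when its reduction modulo $p$ is nonzero.)

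Second I would use the lemma to transfer proportions. The reduction map sending a monic degree-$d$ polynomial in $R[x]$ to its reduction in $\Fp[x]$ is surjective, and every fibre has the same cardinality $p^{(k-1)d}$, since each of the $d$ lower coefficients may be lifted from $\Fp$ to $R$ in exactly $p^{k-1}$ ways. Because the lemma shows that separability of $f$ depends only on $\bar f$, the separable polynomials in $R[x]$ are exactly the preimages of the separable polynomials in $\Fp[x]$. Hence the proportion of separable monic polynomials of degree $d$ in $R[x]$ equals the corresponding proportion in $\Fp[x]$, independently of $k$.

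Finally I would compute the proportion over $\Fp$. Since $\Fp$ is perfect, a monic polynomial is separable if and only if it is squarefree, and by Carlitz the number of squarefree monic polynomials of degree $d\geq 2$ over $\Fp$ is $p^{d}-p^{d-1}$. Dividing by the total count $p^{d}$ of monic polynomials of degree $d$ yields $1-p^{-1}$, completing the proof. The hypothesis $d\geq 2$ enters precisely at this point, as every linear polynomial is separable.
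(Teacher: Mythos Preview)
Your argument is correct, but note that the paper does not actually prove this statement: Theorem~\ref{thm:polak} is quoted as Polak's result from \cite{polak} and used as input (see equation~\eqref{eq:polak}), so there is no proof in the paper to compare against directly.

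That said, your approach lines up with how the paper (following Polak) handles the analogous question over $\Zp$. The discriminant formulation you mention parenthetically is exactly what the paper recalls as \cite[Proposition~2.1]{polak} in Proof~1 of Theorem~\ref{thm:A}: separability of a monic $f$ over a local ring with no nontrivial idempotents is equivalent to $\disc(f)$ being a unit, hence depends only on the reduction modulo the maximal ideal. Your Nakayama argument for $M=R[x]/(f,f')$ is a slightly different and rather clean way to reach the same reduction. The paper's own new contribution in this direction is Theorem~\ref{thm:lift} together with Remark~\ref{rk:general}, which would give yet another proof of Polak's theorem by explicitly lifting a B\'ezout relation $\alpha_0\widetilde f_1+\beta_0\widetilde f_1'=1$ from $(\Z/p\Z)[x]$ up to $(\Z/p^k\Z)[x]$; compared with your Nakayama or discriminant route, that argument is more constructive but considerably longer.
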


\begin{theorem}[Hagedorn and Hatley, \cite{HH}]\label{thm:HH}
 Let $p$ be an odd prime number and   $m,k\geq$ 1 integers. The probability that two randomly chosen monic polynomials of degrees $m$ and 2 in $(\Z/p^k\Z)[x]$ are relatively prime is given by
$$P_{\Z/p^k\Z}(m,2) = 1 - \frac{f_k(p)}{p^{3k}},$$
where $f_k(x)\in\frac{1}{2}\Z[x]$ is an explicit monic polynomial of degree $2k$.
\end{theorem}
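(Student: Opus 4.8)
The plan is to translate coprimality into an elementary condition on roots and then to integrate over $g$. Since $g$ is monic of degree $2$, every monic common factor $h$ of positive degree satisfies $h\mid g$, hence $\deg h\in\{1,2\}$; if $\deg h=2$ then $h=g$, and if $\deg h=1$ then $h=x-r$ with $f(r)=g(r)=0$. Thus $f$ and $g$ fail to be coprime precisely when they share a root in $\Z/p^k\Z$ or when $g\mid f$, the two cases overlapping exactly when $g\mid f$ and $g$ has a root. Writing $Z(g)$ for the root set of $g$ in $\Z/p^k\Z$, I would first record two clean inputs: for fixed $r$ one has $\Pr_f[f(r)=0]=p^{-k}$, and for $m\ge 2$ one has $\Pr_f[g\mid f]=p^{-2k}$, both because the defining conditions are linear in the lowest coefficients of $f$. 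In particular the expected number of common roots is $\sum_{r}\Pr_f[f(r)=0]\,\Pr_g[g(r)=0]=p^{k}\cdot p^{-2k}=p^{-k}$, so after multiplying by $p^{3k}$ this expectation already supplies the monic leading term $p^{2k}$ of $f_k$ and matches the claimed degree.

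To compute the exact value I would condition on $g$ and stratify by the factorization of $\bar g\in\Fp[x]$. If $\bar g$ is irreducible then $g$ is rootless and non-coprimality is exactly $g\mid f$, contributing $p^{-2k}$; if $\bar g$ splits into distinct linear factors then Hensel's lemma gives $g=(x-\alpha)(x-\beta)$ with $\alpha\not\equiv\beta\pmod p$, the pair $(f(\alpha),f(\beta))$ is uniform on $(\Z/p^k\Z)^2$ (the Vandermonde determinant $\beta-\alpha$ being a unit), and the contribution is $2p^{-k}-p^{-2k}$. The delicate stratum is $\bar g=(x-\bar\alpha)^2$: after the measure-preserving substitution $x\mapsto x+\alpha$ I may take $\bar g=x^2$, write $g=x^2+bx+c$ with $p\mid b,\ p\mid c$, and set $j=v_p(\disc g)\ge1$. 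Completing the square (using $p$ odd), $Z(g)$ is controlled entirely by $j$ and by whether the unit part of $\disc g$ is a quadratic residue: it is empty, or a single ball of radius $p^{-\lceil k/2\rceil}$ when $g$ is a perfect square ($j\ge k$), or a pair of balls of radius $p^{-(k-j/2)}$ when $0<j<k$ is even with residue unit part. The crucial point is that in every case these balls have radius at most $p^{-\lceil k/2\rceil}$, i.e.\ precision at least one half; consequently, for a center $c$ and a root $r=c+p^\rho t$ with $\rho\ge\lceil k/2\rceil$, the expansion $f(r)\equiv f(c)+f'(c)p^\rho t\pmod{p^k}$ terminates after two terms, so that the event ``$f$ vanishes on $Z(g)$'' depends on $f$ only through the values of $f$ and $f'$ at the (at most two) centers. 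Since for $m\ge2$ these values are jointly uniform, this already explains why the answer is independent of $m$.

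It remains to evaluate $\Pr_f[\exists\,r\in Z(g):f(r)=0]$, and this is where the main work lies. For a single ball $B=c+p^\rho(\Z/p^k\Z)$ the condition $\exists\,t:f(c)+f'(c)p^\rho t\equiv0\pmod{p^k}$ reduces, via the joint uniform distribution of $(f(c),f'(c))$, to a finite sum over the valuations of $f(c)$ and $f'(c)$, yielding an explicit closed form in $\rho$; the two-ball case is then handled by inclusion--exclusion, the joint term being governed by the $p$-adic separation of the two centers. Feeding these ball-probabilities into the stratification, the number of $g$ with $v_p(\disc g)=j$ runs over a geometric progression in $p^{j}$, the quadratic-residue dichotomy of the unit part contributes the factor $\tfrac12$ responsible for $f_k\in\tfrac12\Z[x]$, and summing over $1\le j< k$ together with the perfect-square term and the rootless correction $\Pr_g[g\text{ rootless}]\cdot p^{-2k}$ should collapse the probability of non-coprimality into $f_k(p)/p^{3k}$ with $f_k$ monic of degree $2k$. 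I expect the principal obstacle to be precisely this final bookkeeping: controlling the valuation-stratified counts of quadratics, evaluating the two-ball joint probabilities uniformly in $j$, and verifying that the resulting alternating geometric sums telescope to a single explicit polynomial. By contrast the reduction, the leading term, and the split strata are comparatively mechanical.
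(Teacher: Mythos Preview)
The paper does not prove this theorem. Theorem~\ref{thm:HH} is quoted from Hagedorn and Hatley \cite{HH} as a known input and is later invoked as a black box in the proof of Theorem~\ref{thm:C}; no argument for it appears anywhere in the paper. There is therefore nothing to compare your proposal against.

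That said, a brief comment on your sketch as an independent attempt. The reduction ``non-coprime $\Leftrightarrow$ $g\mid f$ or $f,g$ share a root in $\Z/p^k\Z$'' is sound because the division algorithm (Lemma~\ref{lem:division}) applies to monic divisors, so $(x-r)\mid f$ iff $f(r)=0$ even over $\Z/p^k\Z$. You are also right that the double-root stratum $\bar g=(x-\bar\alpha)^2$ is where the work lies, since $Z(g)$ can then be a union of $p$-adic balls with up to $p^{\lfloor k/2\rfloor}$ elements rather than at most two points. Two cautions: your joint-uniformity argument for $(f(c),f'(c))$ and for $(f(\alpha),f(\beta))$ requires $m\ge 2$, so the case $m=1$ needs a separate (and easier) treatment; and the final ``bookkeeping'' you flag---summing the ball probabilities over the valuation strata of $\disc g$ and checking that everything collapses to a single polynomial $f_k$ with half-integer coefficients---is precisely the content of \cite{HH}, not a routine simplification. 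Your outline is plausible as a roadmap, but the computation it defers is the actual theorem.
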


{Results similar to Theorem~\ref{thm:HH} have been found for multiple polynomials over a finite field in \cite{BB,Cor}. Other similar problems on polynomials over finite fields can be found in \cite{Hou} and \cite{Gao}, where multivariate polynomials and common divisors of multiple polynomials are studied, respectively.}

Let  $\Zp=\varprojlim \Z/p^k\Z$ denote the ring of $p$-adic integers. We caution readers that some authors write $\Zp$ for $\Z/p\Z$. In the present article, $\Zp$ is \textit{very different} from $\Z/p\Z$. It is the ring of sequences $(a_k)_{k\ge 1}$, where $a_k\in\Z/p^k\Z$ {is} such that the image of $a_{k+1}$ under the natural projection map $\Z/p^{k+1}\Z\rightarrow \Z/p^k\Z$ equals $a_k$ for all $k\ge1$.  There is a natural bijection between $\Zp$ and the set of formal sums 
\begin{equation}\label{eq:formalsum}
\left\{\sum_{i\ge 0}c_ip^i:c_i\in\{0,1,\ldots, p-1\}\right\}.
\end{equation}
{In particular, there is a natural projection map $\Zp\rightarrow \Z/p^k\Z$ for all $k\ge1$.}
We shall see in the main part of the article that the inverse limit definition of $\Zp$ is fundamental for our theoretical proofs, whereas representing elements of $\Zp$ by formal sums is more convenient for dealing with explicit examples. 

We observe that  Polak's result, Theorem~\ref{thm:polak}, does not depend on $k$, and Hagedorn and Hatley's formula in Theorem~\ref{thm:HH} converges to 1 as $k\rightarrow \infty$.  It therefore seems natural to speculate that  the probability {that} a monic polynomial over $\Zp$ {is} separable should be $1-p^{-1}$, whereas the probability {that} two monic polynomials of degree $m$ and $2$ are {relatively prime} over $\Zp$ should be $1$.

{In order to make sense of probabilities on polynomials over $\Zp$, in this article} we consider the Haar measure on $\Zp$, which can be extended to the set of polynomials over $\Zp$ in a natural way. {This is akin to \cite{weiss}, where the distribution of splitting types and Galois groups of polynomials  over extensions of $\Zp$ are studied.} Our first result is the following generalization of Theorem~\ref{thm:polak}.
\begin{theorem}\label{thm:a}
Let $p$ be a prime number and $d\ge 2$ an integer.  With respect to the Haar measure, the probability that a degree $d$ monic polynomial  over $\Zp$ is separable is given by $1-p^{-1}$.
\end{theorem}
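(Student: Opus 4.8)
The plan is to reduce the probability computation over $\Zp$ to a finite counting problem over the residue field $\Z/p\Z$ by means of the discriminant. First I would invoke the criterion recalled in the introduction (and its references \cite{AG,DI,polak}): for a monic polynomial $f\in R[x]$ over a commutative ring $R$, separability of $R[x]/f$ is equivalent to $(f,f')=R[x]$, which in turn holds precisely when the discriminant $\disc(f)\in R$ is a unit, since for monic $f$ the discriminant agrees up to sign with the resultant $\mathrm{Res}(f,f')$. Specializing to $R=\Zp$ and using that $\Zp^\times=\{u\in\Zp:u\not\equiv 0\pmod p\}$, this shows that a monic $f\in\Zp[x]$ is separable if and only if $\disc(f)\not\equiv 0\pmod p$.

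Next I would exploit the fact that $\disc(f)$ is a fixed universal polynomial with integer coefficients in the coefficients $a_0,\dots,a_{d-1}$ of $f$, so it is compatible with the reduction map $\Zp\to\Z/p\Z$. Writing $\bar f\in(\Z/p\Z)[x]$ for the reduction of $f$, we obtain $\overline{\disc(f)}=\disc(\bar f)$, and hence $f$ is separable over $\Zp$ exactly when $\bar f$ is separable over $\Z/p\Z$. The upshot is that separability of $f$ depends only on $f\bmod p$; no information beyond the residues of the coefficients is relevant.

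It then remains to translate this into the language of the Haar measure. Identifying a monic degree $d$ polynomial with its coefficient vector $(a_0,\dots,a_{d-1})\in\Zp^{\,d}$ equipped with the product Haar measure, the coefficientwise reduction to $(\Z/p\Z)^d$ pushes the Haar measure forward to the uniform measure: each residue class is the preimage of a single point under a measure-preserving projection, and so has measure $p^{-d}$. Since the separable locus is a union of such fibers, the probability that $f$ is separable over $\Zp$ equals the proportion of separable monic degree $d$ polynomials over $\Z/p\Z$. By the $k=1$ case of Theorem~\ref{thm:polak}, which is Carlitz's squarefree count \cite{car}, this proportion equals $1-p^{-1}$ for all $d\ge 2$, giving the claimed value.

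The points I expect to require the most care are conceptual rather than computational. The crux is establishing that separability over $\Zp$ is detected entirely on the residue field, which rests on combining the unit criterion for $\disc(f)$ with the description of $\Zp^\times$; and then verifying rigorously that the reduction map is measure preserving, so that the $p$-adic probability genuinely coincides with the finite-field proportion. Once these two facts are in place, the theorem follows at once from the classical count, and the independence from $d$ (beyond the requirement $d\ge 2$) is inherited directly from Carlitz's formula.
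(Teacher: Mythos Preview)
Your proposal is correct and coincides with the first of the two proofs the paper gives for this theorem: invoke the criterion that a monic $f$ is separable if and only if $\disc(f)$ is a unit, reduce modulo $p$ using that the discriminant is a universal integer polynomial in the coefficients, and then combine the Carlitz/Polak count over $\Z/p\Z$ with the fact that each residue class in $\Zp^{\,d}$ has Haar measure $p^{-d}$. The paper also supplies a second, discriminant-free proof based on its lifting lemma (Theorem~\ref{thm:lift}), which shows directly that $(f,f')=\Zp[x]$ if and only if $(\widetilde f_1,\widetilde f_1')=(\Z/p\Z)[x]$ and then concludes in the same way.
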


{This is Theorem~\ref{thm:A} below.}
{After our manuscript was first submitted to the journal, we learned that our result recovers the first part of  \cite[Theorem 1.1]{weiss} when $K_p=\Q_p$. }
In this article, we give two proofs of this theorem. {The first one is based on discriminants, which were also used by Weiss in  \cite{weiss}, and builds on results in  \cite{polak} for $ \Z /p^k\Z$.} The second proof is based on a lifting lemma for monic polynomials that we prove in {Section 2}. More specifically, we show that if $f,g\in\Zp[x]$ are monic, then the ideal generated by $f$ and $g$ {equals} $\Zp[x]$ if and only if their images under the canonical projection modulo $p$ generate $(\Z/p\Z)[x]$. We make use of the fact that a monic polynomial $f$ defined over a commutative ring $R$ is separable if and only if $(f,f')=R[x]$, where $f'$ denotes the formal derivative of $f$ ({see} \cite[\S1.4]{magid}). Therefore, our lemma allows us to translate the separability of a monic polynomial in $\Zp[x]$ to the separability  of its image in $(\Z/p\Z)[x]$. We can then apply Carlitz's result on separable polynomials over $\Z/p\Z$ to obtain a new proof of Theorem~\ref{thm:a} without using discriminants. {The second method we present here is very different from the work of Weiss. It would be interesting to see whether our lifting technique can be generalized to give an alternative proof of Weiss's result in the full generality. It would also be interesting to investigate whether our method can be used to recover other results in \cite{weiss}.}

Our lifting lemma leads us to define the following new notion on polynomials. We say that two polynomials $f$ and $g$ defined over a commutative ring $R$ are \textit{strongly coprime} if they generate $R[x]$. This is a stronger condition than being relatively prime (having no common {nonunit} factor). Our lifting lemma allows us to prove the following {theorem (which is Theorem~\ref{thm:relsep} below)}.
\begin{theorem}
Let $p$ be a prime number and $d, e\ge 1$ integers.  With respect to the Haar measure, the probability that two polynomials $f, g\in\Zp[x]$ of degree $d$ and $e${,} respectively{,} are strongly coprime is given by $1-p^{-1}$.
\end{theorem}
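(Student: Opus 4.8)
The plan is to transfer the problem to the residue field $\Fp=\Z/p\Z$ by means of the lifting lemma, and then to solve a finite counting problem there. First I would invoke the lifting lemma for $f$ and $g$: it asserts that $(f,g)=\Zp[x]$ if and only if the reductions $\bar f,\bar g$ generate $(\Z/p\Z)[x]$. Because $\Fp[x]$ is a principal ideal domain, $(\bar f,\bar g)=\Fp[x]$ holds exactly when $\bar f$ and $\bar g$ are coprime in the usual sense, i.e.\ $\gcd(\bar f,\bar g)=1$. Hence strong coprimality of $f$ and $g$ over $\Zp$ is detected entirely by ordinary coprimality of their reductions over $\Fp$.

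Next I would check that the measure descends correctly. The reduction map $\Zp\to\Fp$ sends the normalized Haar measure to the uniform probability measure on $\Fp$, so coefficientwise reduction pushes the induced measure on degree $d$ monic polynomials over $\Zp$ forward to the uniform measure on the $p^{d}$ monic polynomials of degree $d$ over $\Fp$, and likewise in degree $e$. Combined with the previous step, the probability in question equals the proportion of pairs $(\bar f,\bar g)$ of monic polynomials of degrees $d$ and $e$ over $\Fp$ with $\gcd(\bar f,\bar g)=1$.

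It then remains to compute this proportion, which is the combinatorial core of the argument. Writing $c(m,n)$ for the number of coprime monic pairs of degrees $m$ and $n$ over $\Fp$ and $C(x,y)=\sum_{m,n\ge 0}c(m,n)x^{m}y^{n}$, I would factor an arbitrary monic pair through its (monic) greatest common divisor. Since there are $p^{k}$ monic polynomials of degree $k$, this bijection yields $\frac{1}{(1-px)(1-py)}=\frac{1}{1-pxy}\,C(x,y)$, whence $C(x,y)=\frac{1-pxy}{(1-px)(1-py)}$. Extracting the coefficient of $x^{d}y^{e}$ for $d,e\ge 1$ gives $c(d,e)=p^{d+e}-p^{d+e-1}$, so the proportion of coprime pairs is $1-p^{-1}$, independent of $d$ and $e$, as claimed.

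I expect the main obstacle to be twofold. The essential input is the lifting lemma, which requires the polynomials to be monic (there $\Zp[x]/(f)$ is a finite free $\Zp$-module and a Nakayama-type argument is available); so the first point to secure is that one may legitimately restrict to, or normalize to, this monic case. The second, more computational, point is the count over $\Fp$: the clean cancellation producing $1-p^{-1}$ and, in particular, its independence of the degrees $d$ and $e$ is exactly the phenomenon that makes the $p$-adic probability match the separability probability of Theorem~\ref{thm:a}. The generating-function identity above is the cleanest route; alternatively one can argue by inclusion–exclusion on the common factor.
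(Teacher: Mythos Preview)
Your proposal is correct and follows essentially the same route as the paper: invoke the lifting lemma (Theorem~\ref{thm:lift}) to reduce strong coprimality over $\Zp$ to ordinary coprimality over $\Fp$, observe that the Haar measure pushes forward to the uniform measure (the paper packages this as Lemma~\ref{lem:pre-image}), and then use the count of coprime monic pairs over $\Fp$. The only substantive difference is that the paper simply cites \cite{BB} for the identity $\#\cR^{d,e}_{\Fp}=p^{d+e}(1-p^{-1})$, whereas you supply a self-contained generating-function derivation of it; your concern about the monic hypothesis is also well placed, and the paper handles it implicitly by working throughout with the spaces $P_d(\Zp)$ of monic polynomials.
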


Finally, we show in Theorem~\ref{thm:C} that our method can be used to extrapolate the formulae of Theorem~\ref{thm:HH} to calculate the probability of monic polynomials over $\Zp[x]$ to be relatively prime.
\begin{theorem}
Let $p$ be an odd prime number and $m\geq$ 1  an integer. With respect to the Haar measure, the probability that two randomly chosen monic polynomials of degrees $m$ and 2 in $\Zp[x]$ are relatively prime is  equal to $1$.
\end{theorem}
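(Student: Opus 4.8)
The plan is to obtain the $\Zp$-probability as a limit of the mod-$p^k$ probabilities supplied by Theorem~\ref{thm:HH}, using a single one-sided comparison so that no delicate analysis of coprimality over $\Zp$ itself is required.

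I would first fix the measure-theoretic framework. A monic polynomial of degree $d$ over $\Zp$ is determined by its tuple of non-leading coefficients, so the space of pairs $(f,g)$ with $\deg f=m$ and $\deg g=2$ is identified with $\Zp^{m+2}$, carrying the product Haar probability measure $\mu$. For each $k\ge1$, reduction modulo $p^k$ defines a map $\pi_k\colon\Zp^{m+2}\to(\Z/p^k\Z)^{m+2}$ under which $\mu$ pushes forward to the normalized counting measure; explicitly, $\mu(\pi_k^{-1}(S))=|S|/p^{k(m+2)}$ for every subset $S$, since each fiber is a coset of $(p^k\Zp)^{m+2}$ of measure $p^{-k(m+2)}$.

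The crucial step is a one-directional transfer of non-coprimality under reduction. Write $N_\infty\subseteq\Zp^{m+2}$ for the set of pairs that are \emph{not} relatively prime over $\Zp$, and $N_k\subseteq(\Z/p^k\Z)^{m+2}$ for the analogous set over $\Z/p^k\Z$. If $(f,g)\in N_\infty$, there is a monic $h\in\Zp[x]$ of positive degree together with monic $q_1,q_2\in\Zp[x]$ such that $f=hq_1$ and $g=hq_2$. Reducing modulo $p^k$ and noting that reduction sends a monic polynomial to a monic polynomial of the same (hence still positive) degree, $\bar h$ becomes a common monic factor of positive degree of $\bar f$ and $\bar g$; thus $\pi_k(f,g)\in N_k$. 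This yields $N_\infty\subseteq\pi_k^{-1}(N_k)$ for every $k$. I would stress that I deliberately avoid the reverse inclusion, which is genuinely subtle because $\Z/p^k\Z$ is not a domain and common monic factors can appear or disappear upon reduction; only the easy inclusion is needed to get an upper bound.

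Finally, I would pass to the limit. Combining the two steps gives $\mu(N_\infty)\le\mu(\pi_k^{-1}(N_k))=|N_k|/p^{k(m+2)}=1-P_{\Z/p^k\Z}(m,2)$, which by Theorem~\ref{thm:HH} equals $f_k(p)/p^{3k}$. Because this quantity is the complement of the Hagedorn--Hatley probability, and the latter converges to $1$ as $k\to\infty$, the bound tends to $0$. Since the inequality holds for all $k$, we conclude $\mu(N_\infty)=0$, so the probability that $f$ and $g$ are relatively prime over $\Zp$ equals $1$. The only real obstacle is setting up the measure compatibility and the one-sided inclusion precisely; once these are in place, the theorem is a clean passage to the limit in Theorem~\ref{thm:HH}, and the same scheme would apply verbatim to any pair of degrees for which an analogue of Theorem~\ref{thm:HH} with probability tending to $1$ is available.
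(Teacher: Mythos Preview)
Your proposal is correct and is essentially the same argument as the paper's: both use the one-sided implication that a common monic factor over $\Zp$ reduces to one over $\Z/p^k\Z$, translate this into a measure inequality via the Haar--counting compatibility, and then invoke Theorem~\ref{thm:HH} and let $k\to\infty$. The only cosmetic difference is that you bound $\mu(N_\infty)$ from above while the paper bounds $\mu_{\Haar}^{m,2}(\fR^m_{\Zp})$ from below, and the paper makes explicit (citing \cite{HH}) that $f_k$ has degree $2k$ with coefficients bounded by $2$ to justify $f_k(p)/p^{3k}\to0$, which you assert without detail.
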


The methods used in our proofs are inspired by Hensel's lemma, which says that the roots of a polynomial $f\in \Zp[x]$ can be found by lifting the roots of $\widetilde f\in (\Z/p\Z)[x]$ recursively, where $\widetilde f$ is the polynomial obtained from $f$ by projecting its coefficients from $\Zp$ to $\Z/p\Z$. To quote Neal Koblitz in \cite{Kob}, 

\begin{quote}
Hensel's lemma is often called the $p$-adic Newton's lemma because the approximation technique used to prove it is essentially the same as Newton's method for finding a real root of a polynomial equation with real coefficients. $\ldots$ In one respect the $p$-adic Newton's method is much better than Newton's method in the real case. In the $p$-adic case, it's guaranteed to converge to a root of a polynomial. In the real case, Newton's method usually converges, but not always. $\ldots$ Such perverse silliness is impossible in $\Q_p$.
\end{quote}

\section{Lifting of polynomials from $\Z/\MakeLowercase{p^k}\Z$ to $\Z_\MakeLowercase{p}$.}\label{S:lift}
We begin {with} the following lemma on Euclidean division for polynomials over a commutative ring (rather than a field).

\begin{lemma}[Division algorithm]\label{lem:division}
Let $R$ be a commutative ring {and} $f,g\in R[x]$ such that the leading coefficient of $g$ is a unit of $R$. Then there exist $q,r\in R[x]$ such that 
\[
f=qg+r,
\]
{where either $r = 0$ or $\deg(r) < \deg(g)$.}
\end{lemma}
\begin{proof}
Since the lemma is a simple generalization of the usual division algorithm for polynomials defined over  a field, we only give a sketch {of a} proof here.

If $\deg(f)<\deg(g)$, then we may simply take $q=0$ and $r=f$. So, we may assume that $\deg(f)\ge \deg(g)$. Let $i=\deg (f) - \deg (g) \ge0$ and write $a$ and $u$ for the leading coefficients of $f$ and $g${,} respectively. Since $u$ is a unit of $R$, there exists $b\in R$ such that $a=ub$. Then the leading terms of both $f$ and $bX^ig$ are $aX^{\deg(f)}$. In particular,
\begin{align*}
\deg (f - bX^i g) < \deg (f).
\end{align*}
If the left-hand side is smaller than $\deg(g)$, then we may take $q=bX^i$ and $r=f-bX^ig$ and we are done. Otherwise, we may repeat this procedure with $f$ replaced by $f - bX^ig$, which will produce another polynomial whose  degree is strictly smaller than that of $f- bX^ig$.  We may keep on subtracting appropriate multiples of $g$ and eventually this will produce a polynomial of degree strictly less than $\deg (g)$ {or the zero polynomial, }as required.
\end{proof}
If $f\in\Zp[x]$, we write $\widetilde{f}_k$ for its natural image in $(\Z/p^{k}\Z)[x]$ under the reduction map induced by $\Zp\rightarrow \Z/p^{k}\Z$. Note that if $f$ is monic, then so is $\widetilde{f}_k$. Furthermore,  $\deg(f) = \deg(\widetilde{f}_k)$.

\begin{theorem}\label{thm:lift}
Let $f,g\in\Zp[x]$ be two monic polynomials of degree at least $1$. Then the $\Zp[x]$-ideal generated by $f$ and $g$ equals $\Zp[x]$ if and only if the $(\Z/p\Z)[x]$-ideal generated by $\widetilde{f}_1$ and $\widetilde{g}_1$ equals $(\Z/p\Z)[x]$.
\end{theorem}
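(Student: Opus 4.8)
The plan is to prove the two implications separately, with the forward direction being immediate and the converse requiring a Hensel-style lifting argument organized around the inverse limit $\Zp=\varprojlim\Z/p^k\Z$.

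For the forward direction, suppose $(f,g)=\Zp[x]$, so that $af+bg=1$ for some $a,b\in\Zp[x]$. Applying the coefficient-wise reduction $\Zp\to\Z/p\Z$, which is a ring homomorphism, gives $\widetilde a_1\widetilde f_1+\widetilde b_1\widetilde g_1=1$ in $(\Z/p\Z)[x]$, whence $(\widetilde f_1,\widetilde g_1)=(\Z/p\Z)[x]$. This uses nothing beyond the functoriality of reduction.

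The substance is the converse. Assuming $(\widetilde f_1,\widetilde g_1)=(\Z/p\Z)[x]$, fix $\bar u,\bar v\in(\Z/p\Z)[x]$ with $\bar u\widetilde f_1+\bar v\widetilde g_1=1$. I would first show, by induction on $k$, that $(\widetilde f_k,\widetilde g_k)=(\Z/p^k\Z)[x]$ for every $k\ge1$. Given a relation $a_k\widetilde f_k+b_k\widetilde g_k=1$ modulo $p^k$, lifting $a_k,b_k$ arbitrarily to $(\Z/p^{k+1}\Z)[x]$ produces $a_k\widetilde f_{k+1}+b_k\widetilde g_{k+1}=1-p^k r$ for some $r$, whose contribution $p^k r$ modulo $p^{k+1}$ depends only on $r\bmod p$. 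One then corrects by adding $p^k(\bar r\bar u)\widetilde f_{k+1}+p^k(\bar r\bar v)\widetilde g_{k+1}$, using the mod-$p$ B\'ezout relation to cancel the error term; this is precisely the Hensel/Newton lifting alluded to in the introduction.

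The main obstacle is not producing a relation at each finite level but assembling these into a single identity over $\Zp[x]$, since a priori the coefficients chosen at successive levels need not be compatible under the projections $\Z/p^{k+1}\Z\to\Z/p^k\Z$. To force compatibility I would normalize via the division algorithm (Lemma~\ref{lem:division}): because $\widetilde g_k$ is monic, any B\'ezout pair can be replaced by one with $\deg a_k<\deg g$ and $\deg b_k<\deg f$, a reduction that preserves the relation. I would then prove such a degree-bounded pair is unique modulo $p^k$: if $a\widetilde f_k+b\widetilde g_k=0$ with $\deg a<\deg g$ and $\deg b<\deg f$, reducing mod $p$ and using that $(\Z/p\Z)[x]$ is a PID in which $\widetilde f_1,\widetilde g_1$ are coprime forces $a\equiv b\equiv0\pmod p$; writing $a=pa'$, $b=pb'$ and descending on $k$ yields $a=b=0$. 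Uniqueness guarantees that the reduction modulo $p^k$ of the level-$(k+1)$ pair agrees with the level-$k$ pair, so the coefficients form coherent sequences, i.e.\ elements $a,b\in\Zp[x]$ satisfying $af+bg\equiv1\pmod{p^k}$ for all $k$; since $\Zp[x]$ embeds into $\prod_k(\Z/p^k\Z)[x]$, this gives $af+bg=1$ and hence $(f,g)=\Zp[x]$. I expect the uniqueness/compatibility step to be the most delicate point. Alternatively, one could bypass the induction entirely by noting that the Sylvester map $\Zp[x]_{<\deg g}\times\Zp[x]_{<\deg f}\to\Zp[x]_{<\deg f+\deg g}$ is an isomorphism exactly when its determinant, the resultant $\mathrm{Res}(f,g)$, is a $p$-adic unit, which holds iff $\mathrm{Res}(\widetilde f_1,\widetilde g_1)\ne0$ in $\Z/p\Z$, i.e.\ iff $\widetilde f_1,\widetilde g_1$ are coprime.
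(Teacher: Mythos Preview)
Your proposal is correct, and its overall architecture matches the paper's: a trivial forward direction, then for the converse a Hensel-type lift combined with the division algorithm (Lemma~\ref{lem:division}) to keep the B\'ezout coefficients in the finite-dimensional box $\Zp[x]_{<\deg g}\times\Zp[x]_{<\deg f}$. Where you differ is in how you pass to the limit. The paper lifts with quadratic convergence (from $p^{2^i}$ to $p^{2^{i+1}}$ via multiplication by $1-p^{2^i}Q_i$) and then, rather than proving compatibility, simply invokes compactness of $\Zp$ to extract a convergent subsequence of the degree-bounded pairs $(r_i,s_i)$. Your route instead proves \emph{uniqueness} of the degree-bounded B\'ezout pair at each level, which forces the whole sequence to be coherent under the projections and hence to define elements of $\Zp[x]$ directly. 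Your argument is a bit longer but more explicit and constructive; the paper's compactness shortcut is slicker but nonconstructive (subsequence extraction) and somewhat at odds with its own Remark~\ref{rk:general} advertising the proof as an algorithm. Your resultant alternative is also valid and is the exact analogue, for pairs $(f,g)$, of the discriminant-based ``Proof~1'' that the paper gives for Theorem~\ref{thm:A}; the paper does not mention this route for Theorem~\ref{thm:lift}.
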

\begin{proof}
It suffices to show the following equivalence:
$$\exists \alpha,\beta \in \Zp[x], \alpha f + \beta g = 1 \Longleftrightarrow \exists \alpha_0,\beta_0 \in (\Z/p\Z)[x], \alpha_0 \widetilde{f}_1 + \beta_0 \widetilde{g}_1 = 1.$$

\noindent
($\Rightarrow$) Given $f,g,\alpha,\beta \in\Zp[x]$ such that $\alpha f+\beta g=1$, one can take  $\alpha_0$ and $ \beta_0$ to be $\widetilde{\alpha}_1$ and $ \widetilde{\beta}_1${,} respectively. Then we have trivially  $\alpha_0 \widetilde{f}_1 + \beta_0 \widetilde{g}_1 = 1$.\\ 

\noindent
($\Leftarrow$) Given $f,g \in\Zp[x]$ and $\alpha_0,\beta_0 \in(\Z/p\Z)[x]$ such that
\begin{equation}\label{eq:eqnFp}
\alpha_0 \widetilde{f}_1 + \beta_0 \widetilde{g}_1 = 1,
\end{equation}
{we} will construct a sequence $(r_i, s_i)_{i\in\Z_{\ge0}}$, where $r_i,s_i\in(\Z/p^{2^i}\Z)[x]$ such that $\deg(r_i)<\deg(g)$, $\deg(s_i) < \deg(f)${,} and 
$$r_i \widetilde{f}_{2^i} + s_i \widetilde{g}_{2^i} = 1.$$
We will construct this sequence by induction.\\

\noindent
\textbf{Base case}: Since $g$ is monic, so is $\widetilde{g}_1$  and the division algorithm of Lemma~\ref{lem:division} yields $q_0,r_0\in (\Z/p\Z)[x]$ such that $\deg(r_0) < \deg(g)$ and 
$$\alpha_0 = q_0  \widetilde{g}_1 + r_0.$$
We may rewrite \eqref{eq:eqnFp} {as}
$$r_0 \widetilde{f}_1 + s_0\widetilde{g}_1 = 1,$$
where $s_0 = q_0\widetilde{f}_1 + \beta_0$. Note that 
\begin{align*}
\deg(s_0)+\deg(g)&=\deg(s_0)+\deg(\widetilde{g}_1)\\
&=\deg(s_0\widetilde{g}_1)\\
&=\deg(r_0\widetilde{f}_1)\\
&=\deg(r_0)+\deg(f)< \deg(g)+\deg(f).
\end{align*}
Hence, we deduce that $\deg(s_0)<\deg(f)$ as required.\\

\noindent
\textbf{Inductive hypothesis}: We assume that there exist $r_i, s_i\in (\Z/p^{2^i}\Z)[x]$ such that $\deg(r_i)<\deg(g)$, $\deg(s_i) < \deg(f)${,} and 
$$r_i \widetilde{f}_{2^i} + s_i \widetilde{g}_{2^i} = 1.\\$$

\noindent
\textbf{Induction step}: Let $\hat r_i, \hat s_i\in\Zp[x] $ be two arbitrary lifts of $r_i$ and $s_i${,} respectively. Then
$$\hat r_i f + \hat s_i g = 1 + p^{2^i} Q_i$$
for certain $Q_i\in\Zp[x]$. Multiplying both sides by $1 - p^{2^i} Q_i$ yields
$$(1 - p^{2^i} Q_i)\hat r_i f + (1 - p^{2^i} Q_i)\hat s_i g = 1 - p^{2^{i+1}} Q^2_i.$$
We define $\alpha_{i+1}$ and $\beta_{i+1}$ {to} be the natural images  of $(1 - p^{2^i} Q_i)\hat r_i$ and $(1 - p^{2^i} Q_i)\hat s_i$ in $\Z/p^{2^{i+1}}\Z${,} respectively. This gives 
$$\alpha_{i+1}\widetilde{f}_{2^{i+1}}+\beta_{i+1}\widetilde{g}_{2^{i+1}} = 1$$
inside $(\Z/p^{2^{i+1}}\Z)[x]$. By the same argument as in the base case, we may replace $\alpha_{i+1}$ and $\beta_{i+1}$ by polynomials $r_{i+1}$ and $s_{i+1}$ such that $\deg(r_{i+1})<\deg(g)$ and $\deg(s_{i+1})<\deg (f)$ using the division algorithm of Lemma~\ref{lem:division}, as required.\\

Since $\Zp$ is compact and $\deg(r_i)$ and $\deg(s_i)$ are of bounded degrees, $(r_i,s_i)$ admits a subsequence that converges to a pair of polynomials $(r_\infty,s_\infty)$ in $\Zp[x]$ satisfying
\[r_\infty f+s_\infty g=1.\]
This concludes the proof of $(\Leftarrow)$.
\end{proof}
%\begin{remark}
%While $\alpha_{i+1}$ and $r_i$ are congruent mod $p$, as it is for $\beta_{i+1}$ and $s_i$, the division algorithm does not infer that $\alpha_{i+1}$ and $r_i$ are, nor for $\beta_{i+1}$ and $s_i$. This is why the sequence $(r_i, s_i)$ does not necessarily converge, hence the need for a convergent subsequence.
%\end{remark}
\begin{remark}\label{rk:general} Note that the same proof would go through if we replace {$\Z/p\Z$} by $\Z/p^k\Z$ for an arbitrary integer $k\ge1$. This proof gives a constructive algorithm for a linear combination in $\Zp[x]$. \end{remark}
We give an explicit example that illustrates the inductive step of the algorithm. %The step where we employ  Lemma 2.1 will be omitted to lighten the presentation.
\begin{example}
Take $p=5$ and consider elements of $\Z_5$ as formal sums $$c_0+c_1\cdot 5+c_2\cdot 5^2+\cdots,\quad c_i\in\{0,1,2,3,4\}$$ as in \eqref{eq:formalsum}. We consider the following monic polynomials
\begin{align*}
f&:=x^2 + (3+ 4\cdot5 + 2 \cdot 5^2 +  \cdots)x + (2 + 3\cdot5 + 4\cdot 5^2 +  \cdots),\\
g&:=x + (4 + 2\cdot  5 + 4 \cdot 5^2 +  \cdots).
\end{align*}
One can verify that, in $(\Z/5\Z)[x],$
$$\widetilde{f}_1 + (4x + 1) \widetilde{g}_1 = 1.$$
Taking the lifts $\hat r_0$ and $\hat s_0$ in $\Z_5[x]$ to be  $1 $ and $4x+1 $ respectively, we have
\begin{align*}
\hat r_0 f + \hat s_0 g& = (5+5^2 + \cdots)x + (1 + 5 + 4\cdot 5^2 + \cdots) \\
&=1 + 5\cdot (x^2+(1+5 + \cdots)x+(1 + 4\cdot 5 + \cdots)). 
\end{align*}
In particular, $ Q_0$ is given by $x^2+(1+5 + \cdots)x+(1 + 4\cdot 5 + \cdots)$.

Multiplying $\hat r_0$ and $\hat s_0$ by $ 1 - 5\cdot Q_0$ yields
\begin{align*}
(1 - 5\cdot Q_0)\hat r_0 &= (4\cdot 5+\cdots)x^2+( 4 \cdot 5 +  \cdots)x + (1 + 4\cdot5 +   \cdots),\\
(1 - 5\cdot Q_0)\hat s_0 &=(5+\cdots)x^3+( \cdots)x^2 + (4  +\cdots)x + (1 + 4\cdot5 +   \cdots)
\end{align*}
On projecting to $(\Z/5^2\Z)[x]$, we obtain
$$\alpha_1 = (4\cdot 5)x^2+ (4\cdot 5)x+ (1 + 4\cdot 5),\quad \beta_1 = 5x^3 + 4x + (1 + 4\cdot5).$$
The division algorithm of Lemma{~\ref{lem:division}} (with $R=\Z/5^2\Z$) gives
$$
\alpha_1=((4\cdot 5)x+3\cdot 5) \tilde g_2+2\cdot 5+1.
$$
Therefore, we obtain
$$r_1=2\cdot 5+1,\quad s_1=((4\cdot 5)x+3\cdot 5 ) \tilde f_2+\beta_1=(2\cdot 5+4)x+1.$$
\end{example}
\section{Probability of separable polynomials over $\Z_\MakeLowercase{p}$.}

We recall that if $R$ is a commutative ring, a polynomial $f\in R[x]$ is said to be \textit{separable} if $R[x]/f$ is a separable $R$-algebra. When $f$ is monic, this is equivalent to 
\begin{equation}
   (f,f')=R[x], \label{eq:criterion}
\end{equation}
where $f'$ denotes the formal derivative of $f$ ({see} \cite[\S1.4]{magid}). Let us write $S_R^d$ for the set of separable monic polynomials of degree $d$ in $R[x]$. Let $p$ be a fixed prime number. In \cite{polak}, Polak showed that the proportion of {separable} degree $d$ monic polynomials in $(\Z/p^k\Z)[x]$ is given by  $1-p^{-1}$ for all integers $k\ge1$ and $d\ge 2$. {In particular}, we have
\begin{equation}
\# S^d_{\Z/p^k\Z}=p^{dk}(1-p^{-1}).\label{eq:polak}
\end{equation}

The goal of this section is to generalize Polak's result to polynomials defined over the ring of $p$-adic integers $\Zp$.

Let $P_{d}(\Zp)$ denote the set of monic polynomials of degree $d$ defined over $\Zp$. In particular, if $f=X^d+a_{d-1}X^{d-1}+\cdots +a_0\in P_d(\Zp)$, we may {identify} $f$ with the $d$-tuple $(a_0,\ldots,a_{d-1})\in\Zp^d$. We equip $P_d(\Zp)=\Zp^d$ with  the product measure, denoted by $\mu_{\Haar}^d$, coming from the unique Haar measure on $\Zp$.  When $d=1$, we omit $d$ from the notation and simply write $\mu_{\Haar}$.
\begin{lemma}\label{lem:pre-image}Let $f \in (\Z/p^k\Z)[x]$ be a monic polynomial of degree $d$. Write $[f]\subset P_d(\Zp)$  for the {preimage} of $f$ under the natural projection $\Zp[x]\rightarrow (\Z/p^k\Z)[x]$. Then
$$\mu_{\Haar}^d\left([f]\right) = \frac{1}{p^{kd}}.$$
\end{lemma}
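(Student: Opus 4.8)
The plan is to reduce everything to the measure of a single fiber in $\Zp$ and then invoke the product structure. First I would make the identification explicit: writing $f = x^d + a_{d-1}x^{d-1} + \cdots + a_0$ with each $a_i \in \Z/p^k\Z$, a monic polynomial $F = x^d + b_{d-1}x^{d-1}+\cdots+b_0 \in P_d(\Zp)$ lies in $[f]$ precisely when each coefficient $b_i$ projects to $a_i$ under $\Zp \to \Z/p^k\Z$. Under the identification $P_d(\Zp) = \Zp^d$ given by $F \mapsto (b_0,\ldots,b_{d-1})$, this says
\[
[f] = \prod_{i=0}^{d-1} \pi_k^{-1}(a_i),
\]
where $\pi_k \colon \Zp \to \Z/p^k\Z$ denotes the natural projection. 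Thus $[f]$ is a product of fibers, and since $\mu_{\Haar}^d$ is by definition the $d$-fold product of $\mu_{\Haar}$, it suffices to compute $\mu_{\Haar}\bigl(\pi_k^{-1}(a)\bigr)$ for a single $a \in \Z/p^k\Z$ and then multiply.

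Next I would compute the measure of one fiber. The kernel of $\pi_k$ is $p^k\Zp$, a subgroup of index $p^k$ in $\Zp$, and the fibers $\pi_k^{-1}(a)$ for $a$ ranging over $\Z/p^k\Z$ are exactly its $p^k$ cosets. These cosets are pairwise disjoint, they partition $\Zp$, and they are translates of one another. Because $\mu_{\Haar}$ is translation-invariant, all $p^k$ cosets have the same measure; since they partition $\Zp$ and $\mu_{\Haar}(\Zp) = 1$, each must have measure $1/p^k$. Hence $\mu_{\Haar}\bigl(\pi_k^{-1}(a)\bigr) = p^{-k}$ independently of $a$.

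Finally I would assemble the pieces: applying the product-measure formula to the decomposition above gives
\[
\mu_{\Haar}^d\bigl([f]\bigr) = \prod_{i=0}^{d-1} \mu_{\Haar}\bigl(\pi_k^{-1}(a_i)\bigr) = \left(\frac{1}{p^k}\right)^{\!d} = \frac{1}{p^{kd}},
\]
as claimed. The argument is essentially routine, so I do not anticipate a serious obstacle; the only point requiring a little care is the justification that each fiber has measure $p^{-k}$, which rests on the translation invariance and the normalization $\mu_{\Haar}(\Zp)=1$ of the Haar measure together with the fact that the fibers are precisely the cosets of the finite-index subgroup $p^k\Zp$. Everything else is bookkeeping about the product measure and the coefficient-wise description of $[f]$.
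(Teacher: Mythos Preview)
Your proof is correct and follows essentially the same approach as the paper: identify $[f]$ coefficient-wise as a product of cosets $a_i + p^k\Zp$, note that each such coset has Haar measure $p^{-k}$, and multiply using the product-measure structure. The paper's version is terser (it simply asserts $\mu_{\Haar}(a+p^k\Zp)=p^{-k}$ ``by definition''), whereas you spell out the translation-invariance and partition argument, but the underlying idea is identical.
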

\begin{proof}
By definition, $\mu_{\Haar}(a+p^k\Zp) = \frac{1}{p^k}$ for all $a\in\Zp$. Hence, the {preimage} of any element in $\Z/p^k\Z$ in $\Zp$ has measure $\frac{1}{p^k}$. Hence the lemma follows by considering the {preimage} of each coefficient of $f$.
 \end{proof}

\begin{theorem}\label{thm:A}
Let $p$ be a prime number and $d\ge 2$ an integer. Then $$\mu^d_{\Haar}\left( S_{\Zp}^d\right)=1-p^{-1},$$
{where $S_{\Zp}^d$ denotes the set of separable monic polynomials of degree $d$ in $\Zp[x]$.}
\end{theorem}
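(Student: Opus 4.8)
The plan is to reduce separability over $\Zp$ to separability over the residue field $\Z/p\Z$, and then to count fibers using Lemma~\ref{lem:pre-image} together with Polak's formula \eqref{eq:polak} specialized to $k=1$. The conceptual heart of the argument is that, for a monic polynomial, separability over $\Zp$ is detected by reduction modulo $p$.

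First I would invoke the discriminant criterion: a monic $f$ over a commutative ring $R$ satisfies \eqref{eq:criterion}, equivalently is separable, if and only if $\disc(f)\in R^\times$. For $R=\Zp$ the unit group is $\Zp^\times=\Zp\setminus p\Zp$, so $\disc(f)\in\Zp^\times$ if and only if $\disc(f)\not\equiv 0\pmod p$. Since the discriminant of a degree $d$ monic polynomial is a fixed polynomial with integer coefficients in the coefficients of $f$, reduction modulo $p$ commutes with taking discriminants, that is $\disc(f)\bmod p=\disc(\widetilde{f}_1)$ computed in $\Z/p\Z$. As $\Z/p\Z$ is a field, $\disc(\widetilde{f}_1)\neq 0$ is exactly the condition that $\widetilde{f}_1$ be separable. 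Chaining these equivalences gives
$$f\in S_{\Zp}^d \iff \widetilde{f}_1\in S^d_{\Z/p\Z}.$$

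With this equivalence in hand, $S_{\Zp}^d$ is precisely the preimage under the reduction map $P_d(\Zp)\to P_d(\Z/p\Z)$ of the finite set $S^d_{\Z/p\Z}$, so it decomposes as the disjoint union of the fibers $[\bar f]$ over $\bar f\in S^d_{\Z/p\Z}$. Each such fiber is clopen in $\Zp^d$, hence measurable, and Lemma~\ref{lem:pre-image} with $k=1$ gives $\mu_{\Haar}^d([\bar f])=p^{-d}$. Summing over the fibers and using \eqref{eq:polak} at $k=1$, namely $\#S^d_{\Z/p\Z}=p^{d}(1-p^{-1})$ (valid since $d\ge 2$), yields
$$\mu_{\Haar}^d\left(S_{\Zp}^d\right)=\#S^d_{\Z/p\Z}\cdot p^{-d}=1-p^{-1}.$$

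I expect the main obstacle to be the careful justification of the reduction step: that separability of a monic polynomial over $\Zp$ is equivalent to nonvanishing of the reduced discriminant over $\Z/p\Z$. This rests on the discriminant-unit characterization of separability and on the compatibility of the discriminant with ring homomorphisms; once these are pinned down, the remainder is routine measure-theoretic bookkeeping. An alternative route, bypassing discriminants, would apply Theorem~\ref{thm:lift} to $f$ and a monic rescaling of $f'$ to obtain the same equivalence $f\in S_{\Zp}^d\iff\widetilde{f}_1\in S^d_{\Z/p\Z}$ directly; this would require separate treatment of the case $p\mid d$, where $f'$ fails to have a unit leading coefficient and Theorem~\ref{thm:lift} does not apply verbatim.
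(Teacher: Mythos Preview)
Your argument is correct and is essentially the paper's Proof~1: reduce separability to the unit condition on the discriminant, pass to $\Z/p\Z$, and sum the measures of the fibers via Lemma~\ref{lem:pre-image} and \eqref{eq:polak}. The alternative route you sketch via Theorem~\ref{thm:lift} is the paper's Proof~2, and your observation about the $p\mid d$ case (where $f'$ is not monic) is a genuine subtlety that the paper's Proof~2 does not explicitly address.
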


We give two proofs. The first one is an easy generalization of the proof given in \cite{polak} using discriminants, whereas the second one is based on Theorem~\ref{thm:lift}.

\begin{proof}[Proof 1] Let us first recall from \cite[Proposition~2.1]{polak} the following result.
\begin{proposition} Let $R$ be  a  commutative  ring  with  no  nontrivial  idempotents  and  let $f\in R[x]$ be  a   monic  polynomial. Then $f$ is  separable  if  and  only  if {the discriminant }$\disc(f) \in R$ is  a  unit.
\end{proposition}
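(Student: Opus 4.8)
The plan is to treat the ideal-theoretic criterion \eqref{eq:criterion} as the working definition of separability for monic $f$, so that it suffices to prove the equivalence $(f,f')=R[x]\iff\disc(f)\in R^\times$, and to convert everything into a statement about the finite free $R$-algebra $A:=R[x]/(f)$. Because $f$ is monic of degree $d$, the classes of $1,x,\dots,x^{d-1}$ form an $R$-basis of $A$; in particular $A$ is free of rank $d$ and each $a\in A$ has a norm $N_{A/R}(a):=\det(m_a)\in R$, where $m_a\colon A\to A$ denotes multiplication by $a$.

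First I would record two elementary reductions. The relation $\alpha f+\beta f'=1$ reduces modulo $f$ to $\bar\beta\,\overline{f'}=1$ in $A$, and conversely any $\beta$ with $\beta f'\equiv 1\pmod f$ yields such a relation; hence $(f,f')=R[x]$ if and only if the image $\overline{f'}$ of $f'$ is a unit of $A$. Second, for a finite free $R$-algebra an element is a unit exactly when its norm is: $m_a$ is an endomorphism of a free module, $a\in A^\times$ iff $m_a$ is bijective, and $m_a$ is bijective iff $\det(m_a)\in R^\times$. Combining the two, $f$ is separable if and only if $N_{A/R}(\overline{f'})\in R^\times$.

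The heart of the argument is then the identity $N_{A/R}(\overline{f'})=\pm\,\disc(f)$. I would package it as $\disc(f)=(-1)^{d(d-1)/2}\operatorname{Res}(f,f')$ together with $\operatorname{Res}(f,f')=N_{A/R}(\overline{f'})$ for monic $f$, and prove both as universal polynomial identities. Over the integral domain $\Z[a_0,\dots,a_{d-1}]$, where $f$ carries indeterminate coefficients, one passes to a splitting ring and checks that $\operatorname{Res}(f,f')$ and $N_{A/R}(\overline{f'})$ both equal $\prod_i f'(\alpha_i)$ over the roots $\alpha_i$ of $f$, while $\disc(f)$ equals the same product up to the sign $(-1)^{d(d-1)/2}$; the identity for the given $f$ follows by specializing the $a_i$ to its coefficients. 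With this in hand, $\disc(f)\in R^\times\iff N_{A/R}(\overline{f'})\in R^\times$, which closes the chain of equivalences.

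I expect the main obstacle to be exactly this discriminant--norm identity and keeping the signs and base change honest over an arbitrary commutative ring; the cleanest path is the specialization from the generic case rather than a direct computation over $R$. The hypothesis that $R$ has no nontrivial idempotents is satisfied in every case we care about---in particular for the local rings $\Z/p^k\Z$ and $\Zp$---and, while the norm-based chain above does not seem to use it, it is what makes the alternative fibrewise argument transparent, where one checks $\disc(f)\bmod\mathfrak m=\disc(f\bmod\mathfrak m)$ against the classical field criterion in each residue field $R/\mathfrak m$ over a connected base.
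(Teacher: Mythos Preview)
Your argument is sound: the reduction of $(f,f')=R[x]$ to invertibility of $\overline{f'}$ in $A=R[x]/(f)$, the norm criterion for units in a finite free algebra, and the universal identity $\disc(f)=(-1)^{d(d-1)/2}N_{A/R}(\overline{f'})$ together give exactly the desired equivalence. You are also right that the ``no nontrivial idempotents'' hypothesis plays no role in this chain; the norm criterion for units in a finite free $R$-algebra holds over any commutative ring, so your route in fact proves a slightly stronger statement than the proposition asserts.

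Regarding comparison with the paper: there is nothing to compare, because the paper does not prove this proposition at all. It is merely quoted inside Proof~1 of Theorem~\ref{thm:A} as \cite[Proposition~2.1]{polak} and used as a black box. So your write-up actually supplies more than the paper does here. If you want to match the paper's level of detail, a one-line citation to \cite{polak} suffices; if you want to keep a self-contained proof, the version you have is fine, though you might drop the final paragraph about the idempotent hypothesis and the fibrewise alternative, since it is commentary rather than argument.
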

For a polynomial of a fixed degree,  $\disc(f)$  can be realized as a polynomial in the coefficients of $f$, which is independent of the  ring $R$. In particular, given any $f\in\Zp[x]$, we have $$\disc(f) \equiv \disc(\widetilde f_1) \;(\bmod\; p).$$ 

In particular, $f$ is separable in $\Zp[x]$ if and only if $\widetilde{f}_1$ is separable in $(\Z/p\Z)[x]$. Therefore,
$$S^d_{\Zp} = \bigsqcup_{{f} \in S^d_{\Z/p\Z}}[{f}],$$
where $[{f}]$ denotes the {preimage} of ${f}$ in $\Zp[x]$ under the natural projection  {and $\sqcup$ denotes the disjoint union of sets}. By Lemma~\ref{lem:pre-image} and \eqref{eq:polak}, we deduce that
\begin{align*}
\mu_{\Haar}^d\left(S^d_{\Zp}\right)& = \mu_{\Haar}^d\left(\bigsqcup_{{f} \in S^d_{\Z/p\Z}}[{f}]\right)\\
&=\sum_{{f} \in S^d_{\Z/p\Z}}\mu_{\Haar}^d([{f}])\\
&=\frac{\# S^d_{\Z/p\Z}}{p^d}\\
&=\frac{p^d(1-p^{-1})}{p^d}\\
&=1-p^{-1},
\end{align*}
as required. 
\end{proof}
\begin{proof}[Proof 2]
 Let $f\in \Zp[x]$. Recall that its natural image in $(\Z/p\Z)[x]$ is denoted by $\widetilde{f}_1$. Theorem \ref{thm:lift} tells us that
$$(f,f') = \Zp[x] \Leftrightarrow (\widetilde{f}_1,\widetilde{f}_1') = (\Z/p\Z)[x].$$
{Thus, by the criterion of separability given in \eqref{eq:criterion}},  $f$ is separable in $\Zp[x]$ if and only if $\widetilde{f}_1$ is separable in $(\Z/p\Z)[x]$. {As in Proof 1, we may now deduce Theorem~\ref{thm:A} from  the fact  that $ \# S^d_{\Z/p\Z}=p(1-p^{-1})$ as given by \eqref{eq:polak}, which is a special case of the main result of \cite{polak} (see Theorem~\ref{thm:polak}). }
\end{proof}

\section{Probability of strongly coprime polynomials over $\Z_\MakeLowercase{p}$.}

If  $K$ is a field, then a polynomial  $ f\in K[x]$ is separable if and only if
$$(f,f')=K[x].$$
There is a similar notion for relatively prime polynomials. That is, if $f,g \in K[x]$, then $f$ and $g$ are relatively prime if and only if
$$(f,g) = K[x].$$

However, if we replace $K$ by a commutative ring $R$, then it is possible for two relatively prime polynomials $f,g\in R[x]$ {to be} such that $f$ and $g$ generate a proper ideal of $R[x]$. We define the following new notion that allows us to extrapolate information on relatively prime polynomials in $(\Z/p\Z)[x]$ to polynomials in $\Zp[x]$.

\begin{definition} \label{def:sep}
Let $R$ be a commutative ring. We say that two polynomials $f,g\in R[x]$ are \textbf{strongly coprime} if 
$$(f,g) = R[x].$$
\end{definition}
\begin{theorem}\label{thm:relsep}
Let $p$ be a prime number and $d, e\ge 1$ integers.  With respect to the Haar measures on $P_d(\Zp)$ and $P_e(\Zp)$, the probability that two random polynomials in $P_d(\Zp)$ and $  P_e(\Zp)$ are strongly coprime is given by $1-p^{-1}$.
\end{theorem}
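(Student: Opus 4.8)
The plan is to reduce the probability computation over $\Zp$ to a counting problem over $\Z/p\Z$, exactly paralleling Proof~2 of Theorem~\ref{thm:A}. The key conceptual point is that, by Theorem~\ref{thm:lift}, two monic polynomials $f,g\in\Zp[x]$ are strongly coprime if and only if their reductions $\widetilde f_1,\widetilde g_1\in(\Z/p\Z)[x]$ are strongly coprime. Since $\Z/p\Z$ is a field, strong coprimality over $\Z/p\Z$ coincides with the usual notion of being coprime (having no common factor of positive degree). So the first step is to invoke Theorem~\ref{thm:lift} to obtain the equivalence
\[
(f,g)=\Zp[x]\iff (\widetilde f_1,\widetilde g_1)=(\Z/p\Z)[x].
\]
This shows that the set of strongly coprime pairs in $P_d(\Zp)\times P_e(\Zp)$ is precisely the preimage, under reduction mod $p$, of the set of coprime pairs in $P_d(\Z/p\Z)\times P_e(\Z/p\Z)$.

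Next I would set up the measure-theoretic bookkeeping. Equip $P_d(\Zp)\times P_e(\Zp)$ with the product Haar measure $\mu^d_{\Haar}\times\mu^e_{\Haar}$. By (the obvious product version of) Lemma~\ref{lem:pre-image}, the preimage of any fixed pair $(\bar f,\bar g)\in P_d(\Z/p\Z)\times P_e(\Z/p\Z)$ has measure $p^{-d}\cdot p^{-e}=p^{-(d+e)}$. Writing the strongly coprime locus as the disjoint union of these preimages over all coprime pairs $(\bar f,\bar g)$ over $\Z/p\Z$, the total measure equals
\[
\frac{\#\{\text{coprime monic pairs of degrees }d,e\text{ over }\Z/p\Z\}}{p^{d+e}}.
\]
Thus the whole problem collapses to counting coprime pairs of monic polynomials over the finite field $\Fp$.

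The remaining step is the finite-field count, which is the only place where genuine arithmetic happens, and hence the step I expect to be the main (though standard) obstacle. Over a field, two polynomials fail to be coprime exactly when they share a common irreducible factor. The clean way to compute the proportion of coprime pairs is to condition on the reduction of $g$ at each monic irreducible: a uniformly random pair of monic polynomials of degrees $d$ and $e$ is coprime with probability $1-p^{-1}$, independent of $d$ and $e$ (as long as $d,e\ge 1$). One can derive this either by a direct inclusion–exclusion over common factors, or more slickly by the zeta-function heuristic: the ``probability'' that two random monic polynomials over $\Fp$ share no common factor is $\prod_{P}(1-|P|^{-2})/\prod_P(1-|P|^{-1})^{\cdots}$-type manipulation that evaluates to $1-p^{-1}$; for fixed finite degrees $d,e\ge1$ the exact count likewise gives $(1-p^{-1})p^{d+e}$. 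I would state this as a short lemma (or cite the relevant special case, e.g.\ the $\Z/p\Z$ instance underlying Theorem~\ref{thm:HH}), verify it in the small cases needed, and substitute into the displayed fraction. Dividing $(1-p^{-1})p^{d+e}$ by $p^{d+e}$ yields the claimed probability $1-p^{-1}$, completing the proof.

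The one subtlety to flag is the convergence/measurability argument implicit in passing from a disjoint union over the finite field to a genuine measure: since there are only finitely many residue classes mod $p$, the disjoint union is finite and the additivity of $\mu^d_{\Haar}\times\mu^e_{\Haar}$ over finitely many cylinder sets is immediate, so no limiting argument is required here (in contrast to the compactness argument inside Theorem~\ref{thm:lift} itself). Consequently the bulk of the novelty is already contained in Theorem~\ref{thm:lift}, and the proof of Theorem~\ref{thm:relsep} is essentially a transcription of Proof~2 of Theorem~\ref{thm:A} with $f'$ replaced by an independent polynomial $g$ and the separable-polynomial count replaced by the coprime-pair count over $\Fp$.
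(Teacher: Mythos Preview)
Your proposal is correct and follows essentially the same route as the paper: invoke Theorem~\ref{thm:lift} to reduce to $\Z/p\Z$, partition the strongly coprime locus into finitely many cylinder sets indexed by coprime pairs over $\Fp$, apply Lemma~\ref{lem:pre-image} to each, and plug in the finite-field count $\#\cR^{d,e}_{\Z/p\Z}=p^{d+e}(1-p^{-1})$. The only difference is cosmetic: the paper simply cites \cite{BB} for that count rather than sketching an inclusion--exclusion or zeta-function argument.
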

\begin{proof}
Given a ring $R$, let us write $\cR^{d,e}_R$ for the set of pairs of relatively prime polynomials $(f,g)$, where $f,g\in R[x]$ such that $\deg(f)=d$ and $\deg(g)=e$.

Recall from Theorem~\ref{thm:lift} that 
$$(f,g) = \Zp[x] \Leftrightarrow (\widetilde{f}_1,\widetilde{g}_1) =( \Z/p\Z)[x].$$
In other words, $f$ and $g$ are strongly coprime {if and} only if their projections are. Hence,
$$\cR^{d,e}_{\Zp} = \bigsqcup_{(f,g) \in \cR^{d,e}_{\Z/p\Z}}[f]\times[g].$$
Let $\mu_{\Haar}^{d,e} := \mu_{\Haar}^d{{\times}}\mu_{\Haar}^e$ be the product measure on $P_d(\Zp)\times P_e(\Zp)$. Then, Lemma~\ref{lem:pre-image} tells us that
\begin{align*}
\mu_{\Haar}^{d,e}(\cR^{d,e}_{\Zp})& =\sum_{(f,g) \in \cR^{d,e}_{\Z/p\Z}}\mu_{\Haar}^{d,e}\left([f]\times[g]\right)\\
&=  \frac{\#\cR^{d,e}_{\Z/p\Z}}{p^{d+e}}.
\end{align*}
It {was} proved in \cite{BB} that $$\#\cR^{d,e}_{\Z/p\Z}=p^{d+e}(1-p^{-1}){,}$$
{hence the result follows}.
\end{proof}
Note that we may deduce a similar statement {for} polynomials defined over  $\Z/p^k\Z$ for any integers $k\ge1$.
\begin{corollary}
Let $p$ be a prime number and $d,e, k\ge 1$  integers. The probability that two monic polynomials of degree $d$ and $e$ in $\Z/p^k\Z$[x] are strongly coprime is given by $1-p^{-1}$.
\end{corollary}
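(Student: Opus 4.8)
The plan is to mimic the proof of Theorem~\ref{thm:relsep} exactly, replacing the base ring $\Z/p\Z$ by $\Z/p^k\Z$ throughout and invoking the generalized lifting statement recorded in Remark~\ref{rk:general}. First I would observe that, by Remark~\ref{rk:general}, the conclusion of Theorem~\ref{thm:lift} holds with $\Z/p\Z$ replaced by $\Z/p^k\Z$; that is, for monic $f,g\in\Zp[x]$ we have $(f,g)=\Zp[x]$ if and only if $(\widetilde f_k,\widetilde g_k)=(\Z/p^k\Z)[x]$. This equivalence says precisely that $f$ and $g$ are strongly coprime over $\Zp$ if and only if their images mod $p^k$ are strongly coprime over $\Z/p^k\Z$.

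Next I would partition the set of strongly coprime pairs over $\Zp$ according to their reduction mod $p^k$. Writing $\cR^{d,e}_R$ for the set of strongly coprime monic pairs of degrees $d,e$ over $R$ (consistent with the notation in the proof of Theorem~\ref{thm:relsep}), the equivalence above gives the disjoint-union decomposition
\[
\cR^{d,e}_{\Zp}=\bigsqcup_{(f,g)\in\cR^{d,e}_{\Z/p^k\Z}}[f]\times[g],
\]
where $[f]\subset P_d(\Zp)$ and $[g]\subset P_e(\Zp)$ are the preimages under reduction mod $p^k$. Applying Lemma~\ref{lem:pre-image}, each fiber $[f]$ has measure $p^{-kd}$ and each $[g]$ has measure $p^{-ke}$, so each product set has $\mu_{\Haar}^{d,e}$-measure $p^{-k(d+e)}$. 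Summing over the index set yields
\[
\mu_{\Haar}^{d,e}\bigl(\cR^{d,e}_{\Zp}\bigr)=\frac{\#\cR^{d,e}_{\Z/p^k\Z}}{p^{k(d+e)}}.
\]
Since the probability over $\Z/p^k\Z$ is by definition $\#\cR^{d,e}_{\Z/p^k\Z}/p^{k(d+e)}$ (there being $p^{k(d+e)}$ monic pairs in total), the Haar measure computation over $\Zp$ equals the $\Z/p^k\Z$-probability, and conversely this common value is $1-p^{-1}$ by Theorem~\ref{thm:relsep}.

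The main obstacle, and the only point requiring care, is justifying the count over $\Z/p^k\Z$. In the proof of Theorem~\ref{thm:relsep} the count $\#\cR^{d,e}_{\Z/p\Z}=p^{d+e}(1-p^{-1})$ was quoted from \cite{BB}, which treats the field case $\Z/p\Z$ and does not directly give the statement for $\Z/p^k\Z$. To avoid relying on an independent count, I would invert the logic: rather than deduce the $\Zp$ probability from the $\Z/p^k\Z$ count, I would deduce the $\Z/p^k\Z$ probability from the already-established $\Zp$ probability. That is, the identity $\mu_{\Haar}^{d,e}(\cR^{d,e}_{\Zp})=\#\cR^{d,e}_{\Z/p^k\Z}/p^{k(d+e)}$ holds for \emph{every} $k\ge 1$, and the left-hand side is independent of $k$ and equals $1-p^{-1}$ by Theorem~\ref{thm:relsep}; hence the right-hand side equals $1-p^{-1}$ for all $k$, which is exactly the assertion of the corollary. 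This makes the corollary a clean consequence of the lifting theorem together with Theorem~\ref{thm:relsep}, with no need for a separate combinatorial argument over $\Z/p^k\Z$.
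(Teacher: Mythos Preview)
Your argument is correct and uses the same ingredients as the paper (Remark~\ref{rk:general} and the counting from the proof of Theorem~\ref{thm:relsep}), but the logic is organized in the opposite direction. The paper uses the full chain of equivalences
\[
(f,g)=\Zp[x]\;\Longleftrightarrow\;(\widetilde f_k,\widetilde g_k)=(\Z/p^k\Z)[x]\;\Longleftrightarrow\;(\widetilde f_1,\widetilde g_1)=(\Z/p\Z)[x]
\]
to reduce the question over $\Z/p^k\Z$ \emph{directly down} to $\Z/p\Z$, and then reruns the disjoint-union/counting argument of Theorem~\ref{thm:relsep} with $\Z/p^k\Z$ in place of $\Zp$, appealing once more to the count from \cite{BB}. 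You instead reduce \emph{upward} to $\Zp$, obtain the identity $\mu_{\Haar}^{d,e}(\cR^{d,e}_{\Zp})=\#\cR^{d,e}_{\Z/p^k\Z}/p^{k(d+e)}$, and then read off the right-hand side from the already-established value of the left. Your route avoids re-invoking \cite{BB} and yields as a byproduct the pleasant statement that the strongly-coprime probability is the same at every finite level $k$ and in the limit; the paper's route is marginally more self-contained in that it never needs the compactness/Haar-measure machinery for this finite statement. Either way, the content is the same one-line deduction from the lifting equivalence.
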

\begin{proof}
As explained in Remark~\ref{rk:general}, we have the equivalence 
$$(f,g) = \Zp[x] \Leftrightarrow (\widetilde{f}_k,\widetilde{g}_k) = (\Z/p^k\Z)[x]\Leftrightarrow (\widetilde{f}_1,\widetilde{g}_1) = (\Z/p\Z)[x].$$
Hence, the calculation in the proof of Theorem~\ref{thm:relsep} goes through if we replace $\Zp$ by $\Z/p^k\Z$.
\end{proof}

\section{Probability of relatively prime polynomials over $\Z_\MakeLowercase{p}$.}
{For a} polynomial over $\Zp${,} being square-free is weaker than being separable. In fact, as remarked in \cite{weiss}, the set of square-free polynomials over $\Zp$ has full Haar measure, whereas that of separable {polynomials} has measure $1-p^{-1}$ by Theorem~\ref{thm:A}. Similarly, two polynomials over $\Zp$ being relatively prime polynomials (i.e.{,} having no common {nonunit} factor) is weaker than being strongly coprime.  In this section we illustrate how a similar calculation to what we made in the previous sections allows us to  extrapolate Hagedorn and Hatley's formulae in Theorem~\ref{thm:HH} to a result on relatively prime polynomials over $\Zp$.
\begin{theorem}\label{thm:C}
Let $p$ be an odd prime number and   $m\geq1$   an integer. With respect to the Haar measure, the probability that two randomly chosen monic polynomials in $P_m(\Zp)$ and $P_2(\Zp)$ are relatively prime is $1$.
\end{theorem}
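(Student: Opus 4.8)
The plan is to compute the Haar measure of the set $\cR^{m,2}_{\Zp}$ of relatively prime pairs $(f,g)$, with $f\in P_m(\Zp)$ and $g\in P_2(\Zp)$, by a limiting argument over the finite quotients $\Z/p^k\Z$ that feeds on Theorem~\ref{thm:HH}. The essential difference from Section~4 is that being relatively prime over $\Zp$ is \emph{strictly weaker} than being strongly coprime, so it is not governed by the reduction modulo $p$ alone, and Theorem~\ref{thm:lift} cannot be invoked directly. Instead I would detect relative primality through its shadows at every finite level and let $k\to\infty$.

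For each $k\ge1$, let $A_k\subseteq P_m(\Zp)\times P_2(\Zp)$ be the preimage under reduction modulo $p^k$ of the set $\cR^{m,2}_{\Z/p^k\Z}$ of relatively prime pairs in $(\Z/p^k\Z)[x]$, so that
\[
A_k=\bigsqcup_{(\bar f,\bar g)\in \cR^{m,2}_{\Z/p^k\Z}}[\bar f]\times[\bar g].
\]
By Lemma~\ref{lem:pre-image}, each block $[\bar f]\times[\bar g]$ has measure $p^{-km}\cdot p^{-2k}=p^{-(m+2)k}$, whence
\[
\mu_{\Haar}^{m,2}(A_k)=\frac{\#\cR^{m,2}_{\Z/p^k\Z}}{p^{(m+2)k}}=P_{\Z/p^k\Z}(m,2)=1-\frac{f_k(p)}{p^{3k}},
\]
where the last equality is Theorem~\ref{thm:HH} (this is where the hypothesis that $p$ is odd enters). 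Since $f_k$ is monic of degree $2k$, we have $f_k(p)/p^{3k}\to 0$, so $\mu_{\Haar}^{m,2}(A_k)\to 1$ as $k\to\infty$.

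The remaining point is the single inclusion $A_k\subseteq \cR^{m,2}_{\Zp}$, valid for every $k$. To see it I argue by contraposition: if $(f,g)$ fails to be relatively prime over $\Zp$, it admits a common monic factor $h$ of positive degree, say $f=hq_1$ and $g=hq_2$ in $\Zp[x]$; reducing modulo $p^k$ and using that $\widetilde h_k$ is again monic of the same positive degree shows that $\widetilde h_k$ is a common factor of $\widetilde f_k$ and $\widetilde g_k$, so $(f,g)\notin A_k$. Hence $\mu_{\Haar}^{m,2}(\cR^{m,2}_{\Zp})\ge \mu_{\Haar}^{m,2}(A_k)=1-f_k(p)/p^{3k}$ for all $k$, and letting $k\to\infty$ forces $\mu_{\Haar}^{m,2}(\cR^{m,2}_{\Zp})=1$.

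The main thing to get right is conceptual rather than computational: one must realize that only the inclusion $A_k\subseteq\cR^{m,2}_{\Zp}$ is needed. The reverse inclusion — that relative primality over $\Zp$ is already visible at some finite level — does hold (by a compactness argument lifting a convergent subsequence of common factors), but it is superfluous once $\mu_{\Haar}^{m,2}(A_k)\to1$. I would also note in passing that an independent proof avoiding Theorem~\ref{thm:HH} is possible through resultants: for monic $f,g$ one has $\cR^{m,2}_{\Zp}=\{\operatorname{Res}(f,g)\ne0\}$, and since $\operatorname{Res}$ is a nonzero polynomial in the coefficients, its vanishing locus in $\Zp^{m+2}$ has Haar measure zero; the argument above is preferable here precisely because it realizes the promised extrapolation of Theorem~\ref{thm:HH}.
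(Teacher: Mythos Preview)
Your argument is correct and follows essentially the same route as the paper: the inclusion $A_k\subseteq\cR^{m,2}_{\Zp}$ via descent of common monic factors, the measure computation through Lemma~\ref{lem:pre-image}, and the limit via Theorem~\ref{thm:HH}. One small point to tighten: ``$f_k$ monic of degree $2k$'' alone does not force $f_k(p)/p^{3k}\to0$ without some control on the lower coefficients; the paper invokes from \cite[p.~224]{HH} that the coefficients of $f_k$ have absolute value at most $2$, which is what makes the limit go through.
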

\begin{proof}
Let $f,g\in \Zp[x]$. Observe that if $\widetilde{f}_k$ and $\widetilde{g}_k$ are relatively prime as polynomials in $(\Z/p^k\Z)[x]$, then $f$ and $g$ are also relatively prime since any common {nonunit} factors between them would give common {nonunit} factors of $\widetilde{f}_k$ and $\widetilde{g}_k$.

For $R=\Zp$ or $\Z/p^k\Z$, let us write  $\fR^{m}_{R}$ for the set of relatively prime polynomials $(f,g)$ with $\deg(f)=m$ and $\deg(g)=2$. {Recall that, by our notation, the measure on $\fR^{m}_{\Z_p}$ will be $\mu_{\Haar}^{m,2}$}.Then our observation above tells us that
$$\fR^{m}_{\Zp} \supset \bigsqcup_{(f,g) \in \fR^{m}_{\Z/p^k\Z}}[f]\times[g].$$
Therefore, on applying Lemma~\ref{lem:pre-image} and Theorem~\ref{thm:HH}, we obtain the following inequality{:}
\begin{align*}
\mu_{\Haar}^{m,2}\left(\fR^{m}_{\Zp}\right)& \geq \mu_{\Haar}^{m,2}\left(\bigsqcup_{(f,g) \in \fR^{m}_{\Z/p^k\Z}}[f]\times[g]\right)\\
&= \frac{\#\fR^{m}_{\Z/p^k\Z}}{p^{(2+m)k}}\\
&=1-\frac{f_k(p)}{p^{3k}}.
\end{align*}
From \cite[p. 224]{HH} we have that the polynomial $f_k$ is of degree $2k$ and its coefficients have absolute value at most $2$. Therefore,  $$1-\frac{f_k(p)}{p^{3k}}\rightarrow 1,\quad \text{as }k\rightarrow \infty.$$ But $\mu_{\Haar}^{m,2}\left(\fR^{m}_{\Zp}\right)$ is at most $1$ by definition. This forces
$$\mu_{\Haar}^{m,2}\left(\fR^{m}_{\Zp}\right)=1,$$
as required.
\end{proof}

To conclude, we would like to outline a number of future problems that may be studied using our method.
\begin{itemize}
    \item[(i)] We may calculate the probability of two polynomials over $\Zp$ of any degrees to be relatively prime by estimating the number of polynomials over $\Z/p^k\Z$ of the same degrees that are relatively prime.
    \item[(ii)]  On generalizing techniques of \cite{HH}, we may generalize results of \cite{BB,Cor,Gao} to multiple, potentially multivariate,  polynomials over $\Z/p^k\Z$ (instead of fields). Once this is achieved, we may then apply our method to study multiple, potentially multivariate, polynomials over $\Zp$.
    \item[(iii)] We may, as in \cite{weiss}, consider the ring of integers of a finite extension of $\Q_p$ and generalize our results on coprime and strongly coprime polynomials.
    \item[(iv)] We may give a new proof of Weiss's formula on the distribution  of splitting types of irreducible polynomials defined over  the ring of integers of a finite extension of $\Q_p$.
    \item[(v)] It would also be interesting to investigate whether we may refine the different estimates on the distribution of Galois groups of irreducible polynomials  in \cite[Theorems 1.2--1.6]{weiss} using our lifting technique.
\end{itemize}

\subsection*{Acknowledgments.}
We are grateful to Jeffrey Hatley and Benjamin Weiss for answering our questions. We thank Hugues Bellemare and Gautier Ponsinet for  helpful discussions. Finally, we thank the anonymous referees and the editor for many useful comments and suggestions on earlier versions of this article. The first named author's research is supported by the NSERC Discovery Grants Program 05710. Parts of this work were carried out during the second named author's research internship at Laval University in the summer of 2018; he was supported by a NSERC Undergraduate Student Research Award during the internship.


\begin{thebibliography}{10}

\bibitem{AG}
Auslander, M.,  Goldman, O. (1960).  The Brauer group of a commutative
  ring. \emph{Trans. Amer. Math. Soc.} 97: 367--409.

\bibitem{BB}
 Benjamin, A. T.,  Bennett, C. D. (2007). The probability of relatively
  prime polynomials. \emph{Math. Mag.} {80}(3): 196--202.

\bibitem{car}
 Carlitz, L. (1932). The arithmetic of polynomials in a Galois
  field. \emph{Amer. J. Math.} {54}(1): 39--50.

\bibitem{Cor}
 Corteel, S.,  Savage, C. D.,   Wilf, H. S.,  Zeilberger, D. (1998). A
  pentagonal number sieve. \emph{J. Combin. Theory Ser. A}. {82}(2):
  186--192.

\bibitem{DI}
DeMeyer, F.,  Ingraham, E. (1971). \emph{Separable Algebras over Commutative
  Rings}. Lecture Notes in Mathematics, Vol. 181. Berlin-New York: Springer-Verlag.

\bibitem{Gao}
 Gao, Z.,  Panario, D. (2006). {Degree distribution of the greatest
  common divisor of polynomials over {$\Bbb F_q$}}. \emph{Random Structures
  Algorithms}. 29(1): 26--37.

\bibitem{HH}
 Hagedorn, T. R., Hatley, J. (2010). {The probability of relatively
  prime polynomials in {$\Z_{p^k}[x]$}}. \emph{Involve}. 3(2):
  223--232.

\bibitem{Hou}
Hou, X., Mullen, G. L. (2009). Number of irreducible polynomials and
  pairs of relatively prime polynomials in several variables over finite
  fields. \emph{Finite Fields Appl.} 15(3): 304--331.

\bibitem{Kob}
Koblitz, N. (1984). \emph{{$p$}-adic Numbers, {$p$}-adic Analysis, and
  Zeta-Functions}. Graduate Texts in Mathematics, Vol.~58. New York, NY: Springer Verlag.

\bibitem{magid}
Magid, A. R. (2014). \emph{The Separable Galois theory of Commutative Rings}. Pure and Applied Mathematics (Boca Raton). Boca Raton, FL: CRC Press.

\bibitem{polak}
Polak, J. K. C. (2018). Counting separable polynomials in {$\Z/n[x]$}. \emph{Canad. Math. Bull.} 61(2): 346--352.

\bibitem{weiss}
Weiss, B. L. (2013). Probabilistic Galois theory over {$p$}-adic fields.  \emph{J. Number Theory}. 133(5): 1537--1563.

\end{thebibliography}
\end{document}